\newtheorem{theorem}{Theorem}[section]
\newtheorem{lemma}[theorem]{Lemma}
\newtheorem{proposition}[theorem]{Proposition}
\newtheorem{corollary}[theorem]{Corollary}
\theoremstyle{definition}
\newtheorem{definition}[theorem]{Definition}
\newtheorem{example}[theorem]{Example}
\theoremstyle{remark}
\newtheorem{remark}[theorem]{Remark}
\newtheorem{problem}[theorem]{Problem}
\numberwithin{equation}{section}
\newcommand\R{\mathbb{R}}
\def\sideremark#1{\ifvmode\leavevmode\fi\vadjust{\vbox
to0pt{\vss \hbox to 0pt{\hskip\hsize\hskip1em
\vbox{\hsize2cm\tiny\raggedright\pretolerance10000
\noindent#1\hfill}\hss}\vbox to8pt{\vfil}\vss}}}
\begin{document}

\title[A Note on The Mazur-Ulam property of almost-CL-spaces]{A note on The Mazur-Ulam property of almost-CL-spaces}

\author{Dongni Tan}
\author{Rui Liu$^*$}

\address{Department of Mathematics, Tianjin University of Technology, Tianjin 300384, P.R. China}

\email{tandongni0608@gmail.com}

\address{Department of Mathematics and LPMC, Nankai
University, Tianjin 300071, P.R. China}

%\address{Department of
%Mathematics, Texas A$\&$M University, College Station, TX
%77843-3368}

\email{ruiliu@nankai.edu.cn}

\subjclass[2010]{Primary 46B04; Secondary 46B20,46A22}

\keywords{(T)-property, isometric extension,  Almost-CL-space, unit sphere.}

\thanks{%This work was partially done while the author were visiting the University of Texas at Austin and Texas A\&M University, and in the Linear Analysis Workshop at Texas A\&M University which was funded by NSF Workshop Grant.
$*$Corresponding author.\\
\indent The first author was supported by the Natural Science Foundation of China 11126249. The second author was supported by the Natural Science Foundation of China 11001134 and 11126250, the Fundamental Research Funds for the
Central Universities, and the Tianjin
Science \& Technology Fund 20100820.}

\begin{abstract}We introduce the (T)-property, and prove that every Banach space with the (T)-property has the Mazur-Ulam property (briefly MUP). As its immediate applications, we obtain that almost-CL-spaces admitting a smooth point (specially, separable almost-CL-spaces) and a two-dimensional space whose unit sphere is a hexagon has the MUP. Furthermore, we discuss the stability of the spaces having the MUP by the $c_0$- and $\ell_1$-sums, and show that the space $C(K,X)$ of the vector-valued continuous functions has the
the MUP, where $X$ is a separable almost-CL-space and $K$ is a compact metric space.
\end{abstract}

\date{\today}
\maketitle

\section{Introduction}
The classical Mazur-Ulam theorem states that every surjective
isometry between normed spaces is a linear mapping up to
translation. In 1972, P. Mankiewiz \cite{16} extended this by showing that every surjective
isometry between the open connected subsets of normed spaces can be
extended to a surjective affine isometry on the whole space. This result implies that the
metric structure on the unit ball of a real normed space constraints the linear structure
of the whole space. It is of interest to us whether this result can be
extended to unit spheres. In 1987, D. Tingley \cite{Ti} first
studied isometries on the unit sphere and raised the \emph{isometric
extension problem}:
\begin{problem}\label{pr:1}
Let $X$ and $Y$ be normed spaces with the unit spheres $S_X$ and
$S_Y$, respectively. Assume that $T : S_X\to S_Y$ is a surjective
isometry. Does there exist a linear isometry
$\widetilde{T} : X\to Y$ such that $\widetilde{T}|_{S_X}=T$?
\end{problem}

\noindent In \cite{Ti}, D. Tingley only proved that $T(-x)=-T(x)$ for all $x\in S_X$, with $X$ being assumed to be a finite-dimensional Banach space.
During the past decade, Ding and his students have been
working on this topic and have obtained many important results (see
\cite{D8,D9,LL} for a survey of results). Cheng and Dong \cite{CD} introduced the notion of the following Mazur-Ulam property and raised a generalized Mazur-Ulam question, whether every Banach space admits the Mazur-Ulam property, which is equivalent to the isometric extension problem.
\begin{definition}
A Banach space $X$ is said to have the \emph{Mazur-Ulam property} (briefly, \emph{MUP}) provided that for every Banach space $Y$, every surjective isometry $T$ between the two unit spheres of $X$ and $Y$ is the restriction of a linear isometry between the two spaces.
\end{definition}
\noindent It has been proved that several classical Banach spaces have the MUP, as for example
$\ell^{p}(\Gamma),\,L^p(\mu)$($1\leq p\leq \infty$) and $C(\Omega)$ (see [4--6, 14--16, 23--25]). However, the peoblem is still open in the general case, even in two dimensions.

Now, a natural and important problem is that of finding an essential
sphere structure such that the isometric extension problem holds.
Recently, L. Cheng and Y. Dong \cite{CD} attacked the problem for the class of CL-spaces
admitting a smooth point and polyhedral spaces. Unfortunately
their interesting attempt failed by a mistake at the very end of the proof.
L. Cheng told to V. Kadets and M. Martin that they do not see how their proof
can be repaired (also see the introduction in \cite{KM}). Kadets and Martin \cite{KM} prove that finite-dimensional
polyhedral Banach spaces have the MUP.

In this note, we introduce the (T)-property, and prove that every Banach space with the (T)-property has the MUP. As its applications, we obtain that almost-CL-spaces admitting a smooth point (specially, separable almost-CL-spaces) and a two-dimensional space whose unit sphere is a hexagon has the MUP.
Furthermore, we discuss the stability of the spaces having the MUP by the $c_0$-sums and $\ell_1$-sums, and show that the space $C(K,X)$ of the vector-valued continuous functions has
the MUP, where $X$ is a separable almost-CL-space and $K$ is a compact metric space.
This is the first time to study the stability of the MUP and the first examples of vector-valued spaces having the MUP.
We cite some known results on almost-CL-space theory to show that our result
includes many important examples of Banach spaces.

Throughout this paper, we consider the spaces all over the real
field. For a Banach space $X$,  $B_X, S_X$ and $ex(X)$ will stand for the unit ball of $X$, the unit sphere of $X$ and the set of extreme
points of $B_X$, respectively.  Let $x\in S_X$, the
\emph{star of $x$ with respect to $S_X$} is
defined by
$St(x)=\big\{\,y : y\in S_X,\, \|\,y+x\,\|=2
\,\big\}.$ If $C$ is a convex subset of $S_X$, we say that $C$ is
a \emph{maximal convex subset of $S_X$} if it is not properly
contained in any other convex subset of $S_X$ (cf. \cite{Ti}).
\begin{definition}
A Banach space $X$ is said to be \emph{a (an almost-) CL-space} if for every maximal convex set $C$ of $S_X$, we have $$B_X=co(C\cup-C)\quad (B_X=\overline{co}(C\cup-C), \mbox{respectively}).$$
\end{definition}
\noindent The notion of CL-spaces were first introduced
in 1960 by R. Fullerton \cite{F}. Almost-CL-spaces first studied without a name in the memoir by J. Lindenstrauss \cite {J} and were further extended   by Lima \cite{Lima1,Lima2} who showed that $L_1(\mu)$ spaces and their preduals are CL-spaces.  For
general information on CL-spaces and almost-CL-spaces, we refer to \cite{MR1,MR2}.
%A \emph{face} of a closed convex set Kc_ L~" is Kc-,H where H is a
%hyperplane supporting K. A facet of K is a maximal face, maximal in
%the sense that it is not properly contained in any other face. It is
%easy to show that each face of K and each point of the boundary of K
%is contained in a facet. We shall be dealing with faces, facets,
%extreme and exposed points of B, the unit ball of a norm on R n. We
%also refer to these as faces, facets, extreme and exposed points of
%S, the unit sphere.

\section{Main Results}

First, we introduce the notion of D.Tingley property (briefly, (T)-property) using the
notion of stars and maximal convex subsets of unit spheres.

\begin{definition}\label{de:1}
A Banach space $X$ is said to have the (T)-property if there exists a subset $\{x_\gamma:\gamma\in\Gamma\}$ of $S_X$
satisfying the following conditions:
\begin{enumerate}
\item[(i)] For any $\gamma\in\Gamma$,
$ St(x_\gamma)$ is a maximal convex subset of $S_X$;

\item[(ii)] $S_X=\bigcup\{ St(x_\gamma):\gamma\in\Gamma\}$;

\item[(iii)] For every $x\in S_X$, $\gamma\in\Gamma$ and $\varepsilon>0$
there exist $x^+_\gamma\in St(x_\gamma)$ and
$x^-_\gamma\in St(-x_\gamma)$ such that
\[\|x-x^+_\gamma\|+\|x-x^-_\gamma\|\leq 2+\varepsilon.\]
\end{enumerate}
\end{definition}

\begin{remark}
Note that when $E$ is a subspace of a Banach space $X$ and $C$ is a maximal convex subset of $S_X$, $C_E=C\cap E$ may not be a maximal
convex set of $S_E$. For example, in the $3$-dimensional space
$X=l_\infty^{\,(3)}$, $C=\{(x,y,1)\,|\,|x|\le 1,|y|\le 1\}$ is a
maximal convex set of $S_X$. Let
$E=\mathrm{span}\{(1,1,1),(1,-1,0)\}$. Then $C_E=C\cap
E=\{(1,1,1)\}$. Indeed, if
$(x,y,1)=\alpha\cdot(1,1,1)+\beta\cdot(1,-1,0)=(\alpha+\beta,\alpha-\beta,\alpha)$
with $|x|\le1$ and $|y|\le1$, then we have $x=y=\alpha=1$ and
$\beta=0$. But $\{(1,1,1)\}$ is not a maximal convex set of $S_E$,
since $(1,1,1)\in \mathrm{convex}\{(1,1,1),(1,-1,0)\}\subset S_E$.
\end{remark}

\begin{remark}\label{re:1} Note that, for any $x\in S_X$, if $ St(x)$ is
convex, then it is a maximal convex subset of
$S_X$. Actually, let $C\subset S_X$ be convex with $x\in C$. Then for every $y\in C$,  $(x+y)/2\in C$ implies that
$\|x+y\|=2$. Thus $y \in St(x)$, and hence $C\subset  St(x)$.
\end{remark}
This remark and \cite[Corollary 1, Lemma 2]{liu07a} show the following proposition.
\begin{proposition}\label{LT1}
Let $X$ and $Y$ be Banach spaces, and let $T: S_X\rightarrow S_Y$ be a surjective isometry. Then for every $x\in S_X$,
the following two statements are equivalent:
\begin{enumerate}
\item[(i)]$St(x)$ is a maximal convex subset of $S_X$.
\item[(ii)] $T(St(x))$ is a maximal convex subset of $S_Y$.
\end{enumerate}

\end{proposition}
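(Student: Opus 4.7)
The plan is to deduce the proposition in a couple of lines from Remark \ref{re:1} and the results cited from \cite{liu07a}. The relevant input from that paper is the following pair of facts about any surjective isometry $T: S_X \to S_Y$ between unit spheres: $T$ sends each star onto the corresponding star, i.e.\ $T(St(x)) = St(T(x))$ for every $x \in S_X$, and $T$ carries any convex subset of $S_X$ onto a convex subset of $S_Y$. Both statements stem from Tingley's identity $T(-x) = -T(x)$ combined with the distance-preserving property of $T$ on the sphere.

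Granting these, the argument is short. For (i) $\Rightarrow$ (ii), assume $St(x)$ is a maximal convex subset of $S_X$; in particular it is convex. Then $T(St(x))$ is a convex subset of $S_Y$, and by the star-preservation identity it equals $St(T(x))$. Remark \ref{re:1}, applied inside $Y$, immediately upgrades convexity to maximal convexity: $St(T(x)) = T(St(x))$ is a maximal convex subset of $S_Y$, which is (ii).

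For the converse (ii) $\Rightarrow$ (i), I would use that $T^{-1}: S_Y \to S_X$ is itself a surjective isometry and that $T^{-1}(St(T(x))) = St(x)$, and then repeat the argument with $T^{-1}$ in place of $T$. The only delicate point of this plan is pulling the two preservation properties cleanly out of \cite[Corollary 1, Lemma 2]{liu07a}; once they are in hand, the equivalence reduces to Remark \ref{re:1} together with the fact that $T$ is a bijection, so I do not anticipate any further obstacle.
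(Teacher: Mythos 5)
Your outline coincides with the paper's own treatment: the authors likewise dispose of the proposition in one line by combining Remark \ref{re:1} with the results quoted from \cite{liu07a}, and your logical skeleton --- identify $T(St(x))$ with $St(Tx)$, use Remark \ref{re:1} to pass between ``the star is convex'' and ``the star is a maximal convex set,'' and obtain the converse by running the same argument for $T^{-1}$ --- is exactly the intended one. The one ingredient you should restate is your second imported ``fact.'' A surjective isometry between unit spheres is not known to carry arbitrary convex subsets of $S_X$ onto convex subsets of $S_Y$, and this certainly does not follow from $T(-x)=-T(x)$ together with preservation of distances: an isometry of spheres need not send metric midpoints to algebraic midpoints, so the image of a segment of $S_X$ is a priori only a curve in $S_Y$. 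What the cited results actually provide, besides $T(St(x))=St(Tx)$, is the preservation of the relation $\|u+v\|=2$ and, built on it, the (separately proved, non-formal) fact that $T$ maps \emph{maximal} convex subsets of $S_X$ onto maximal convex subsets of $S_Y$. Since you only ever apply your convexity claim to the maximal convex set $St(x)$, substituting this weaker, correct statement repairs the argument without changing its shape; but the blanket assertion that convexity is preserved is stronger than anything available and should not be invoked.
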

The following is our main theorem.
\begin{theorem}\label{th:1}
Let $X$ and $Y$ be Banach spaces, and let $T: S_X\rightarrow S_Y$ be a surjective isometry. If $X$ has the (T)-property, then
$T$
can be extended to a linear surjective isometry from $X$ onto $Y$.
\end{theorem}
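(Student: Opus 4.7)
The plan is to extend $T$ to a map $\widetilde{T}: X \to Y$ by positive homogeneity, verify that $\widetilde{T}$ is a bijective isometry fixing the origin, and then invoke the classical Mazur--Ulam theorem to conclude that $\widetilde{T}$ is linear. Let $\{x_\gamma:\gamma\in\Gamma\}$ be a family witnessing the (T)-property of $X$ and write $y_\gamma:=T(x_\gamma)$.

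By condition (i) of Definition~\ref{de:1} together with Proposition~\ref{LT1}, for every $\gamma\in\Gamma$ the images $M_\gamma:=T(St(x_\gamma))$ and $N_\gamma:=T(St(-x_\gamma))$ are maximal convex subsets of $S_Y$, and Remark~\ref{re:1} gives $M_\gamma\subseteq St(y_\gamma)$. Since $\|{-x_\gamma}-z\|=\|x_\gamma+z\|=2$ for every $z\in St(x_\gamma)$, the point $T(-x_\gamma)$ lies at distance $2$ from every element of $M_\gamma$; the standard observation that a maximal convex subset of $S_Y$ is cut out of $B_Y$ by a norm-one supporting functional (so that its ``distance-$2$ set'' coincides with the antipodal face) then forces $-T(-x_\gamma)\in M_\gamma$ and, symmetrically, $-y_\gamma\in N_\gamma$.

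Next I would upgrade these containments to the antipodal identification $T(-x_\gamma)=-y_\gamma$ using condition (iii). Transferring (iii) across the isometry $T$ yields the analogous approximation property in $S_Y$: for every $g\in S_Y$ there exist $g^+\in M_\gamma$, $g^-\in N_\gamma$ with $\|g-g^+\|+\|g-g^-\|\le 2+\varepsilon$. Applied to $g=-y_\gamma$, the inclusion $g^+\in M_\gamma\subseteq St(y_\gamma)$ forces $\|{-y_\gamma}-g^+\|=\|y_\gamma+g^+\|=2$, so the bound collapses to $\|{-y_\gamma}-g^-\|\le\varepsilon$; hence $-y_\gamma\in\overline{N_\gamma}=N_\gamma$, using that a maximal convex subset of $S_Y$ is closed (its closure is still a convex subset of $S_Y$, so maximality gives equality). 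Together with $T(-x_\gamma)\in N_\gamma\cap(-M_\gamma)$, this forces $N_\gamma=-M_\gamma$ and then $T(-x_\gamma)=-y_\gamma$. Condition (ii) covers $S_X$ by the stars $St(x_\gamma)$, and a further application of (iii)---together with the observation that $\|{-x}-z\|=\|x+z\|=2$ for any two points $x,z\in St(x_\gamma)$ by convexity of the star---extends the antipodal identification to $T(-x)=-T(x)$ for every $x\in S_X$.

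Finally, set $\widetilde{T}(0):=0$ and $\widetilde{T}(x):=\|x\|\,T(x/\|x\|)$ for $x\ne 0$. By the above, $\widetilde{T}$ is odd and positively homogeneous, and on pairs of vectors of the same norm it is an isometry by construction. For vectors of different norms I would verify $\|\widetilde{T}(x)-\widetilde{T}(y)\|=\|x-y\|$ by approximating the relevant unit-sphere vectors with the pairs $x_\gamma^\pm$ supplied by (iii), transferring across $T$ and letting $\varepsilon\to 0$; the same bound applied to $T^{-1}$ gives the reverse inequality. Once $\widetilde{T}$ is a bijective isometry fixing $0$, the classical Mazur--Ulam theorem delivers linearity. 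The main obstacle is the middle step: condition (iii) only controls a \emph{sum} of distances, so extracting the pointwise identification $T(-x_\gamma)=-y_\gamma$ and then propagating it to the whole sphere demands a careful interplay between (iii) and the maximal-convex / supporting-functional structure supplied by Proposition~\ref{LT1}.
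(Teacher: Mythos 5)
Your overall frame---extend $T$ by positive homogeneity and invoke the classical Mazur--Ulam theorem---is the same as the paper's, but the decisive middle step is missing. What actually has to be proved is the two-point identity $\|x_1-\lambda x_2\|=\|Tx_1-\lambda Tx_2\|$ for all $x_1,x_2\in S_X$ and $\lambda\in\R$; this \emph{is} the statement that $\widetilde{T}$ is an isometry, and you defer it to ``approximating the relevant unit-sphere vectors with the pairs $x_\gamma^\pm$ and letting $\varepsilon\to0$'' without supplying a mechanism. The paper's mechanism is to attach to each $\gamma$ a pair of norm-one functionals $x_\gamma^*\in S_{X^*}$, $y_\gamma^*\in S_{Y^*}$ supporting the maximal convex sets $St(x_\gamma)$ and $T(St(x_\gamma))$ (Eidelheit separation plus Proposition~\ref{LT1}), and then to use condition (iii) of Definition~\ref{de:1}---the only place it is used---to prove the functional identity $y_\gamma^*(Tx)=x_\gamma^*(x)$ for \emph{every} $x\in S_X$: the bound $\|x-x_\gamma^+\|+\|x-x_\gamma^-\|\le 2+\varepsilon$, combined with $y_\gamma^*(Tx_\gamma^+)-y_\gamma^*(Tx_\gamma^-)=2$, forces $y_\gamma^*(Tx_\gamma^+-Tx)\ge\|x_\gamma^+-x\|-\varepsilon$ and $y_\gamma^*(Tx-Tx_\gamma^-)\ge\|x-x_\gamma^-\|-\varepsilon$, which squeeze $y_\gamma^*(Tx)$ between $x_\gamma^*(x)-\varepsilon$ and $x_\gamma^*(x)+\varepsilon$. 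Condition (ii) then yields $\|z\|=\max_\gamma|x_\gamma^*(z)|$ for every $z\in X$ (each norm-one vector lies in some $St(x_\gamma)$, where $x_\gamma^*$ attains the value $1$), and likewise on the $Y$ side, and the two-point identity follows by linearity of the functionals. Without constructing these functionals and proving $y_\gamma^*\circ T|_{S_X}=x_\gamma^*|_{S_X}$, your final paragraph has no content.

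Your detour through oddness has a gap of its own and, in any case, would not suffice. From $T(-x_\gamma)\in N_\gamma\cap(-M_\gamma)$ you conclude $N_\gamma=-M_\gamma$ and then $T(-x_\gamma)=-y_\gamma$; but two maximal convex subsets of $S_Y$ can meet without coinciding, and even if $N_\gamma=-M_\gamma$ this only places $T(-x_\gamma)$ inside $-T(St(x_\gamma))$, which does not identify it with $-T(x_\gamma)$. (Pointwise antipodality $T(-x)=-T(x)$ is Tingley's theorem, known in general only under extra hypotheses; in the paper it is not an intermediate step at all but a consequence of the linearity of $\widetilde{T}$ obtained at the very end.) Moreover, even a complete proof of oddness would only handle $\lambda<0$ with $|\lambda|=1$ in the two-point identity; the case of vectors of different norms, which is where the theorem's content lies, would remain untouched.
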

\begin{proof}
Let $\{x_\gamma:\gamma\in\Gamma\}\subset S_X$ satisfy the conditions (i)-(iii) of (T)-property.
We first show that there exists a family of pairs
$\{x_\gamma^*,y_\gamma^*\}_{\gamma\in\Gamma}\subset S(X^*)\times
S(Y^*)$
such that
\begin{equation}\label{LT4}
y_\gamma^*(Tx)=x_\gamma^*(x)=\theta
\end{equation} for all
$\gamma\in\Gamma$, $\theta\in\{\pm1\}$ and\; $x\in
 St(\theta x_\gamma)$.
 Indeed, since $St(x_\gamma)$ is a maximal
convex subset of $S_X$, it follows from the Eidelheit separation theorem that there exists
an $x_\gamma^*\in X^*$ with $\|x_\gamma^*\|=1$ such that
$x_\gamma^*(x)=1$ for all $x\in St(x_\gamma)$, and hence
$x_\gamma^*(x)=-1$ for all $x\in St(-x_\gamma)$ since $St(-x_\gamma)=-St(x_\gamma)$.

Note from \cite[Corollary 1]{liu07a} that \[T( St(-x_\gamma))= St(T(-x_\gamma))=
- St(-T(-x_\gamma))=-T( St(x_\gamma)).\]
Then by Proposition \ref{LT1}, a similar argument applied to $T(St(x_\gamma))$ yields the desired result.

We next show that

\begin{equation}\label{LT:2}y_\gamma^*\circ
T|_{S_X}=x_\gamma^*|_{S_X} \quad \forall \gamma\in \Gamma .\end{equation}

For every $x\in S_X, \gamma\in\Gamma$ and $\varepsilon>0$, by the (T)-property (iii) there exist
$x_\gamma^+\in St(x_\gamma)$ and
$x_\gamma^-\in St(-x_\gamma)$ such that
$\|x-x_\gamma^+\|+\|x-x_\gamma^-\|\leq2+\varepsilon $.  Then
\begin{eqnarray*}\label{eq:1}2&=&y_\gamma^*(Tx_\gamma^+)-y_\gamma^*(Tx_\gamma^-)\\
&=&y_\gamma^*(Tx_\gamma^+-Tx)+y_\gamma^*(Tx-Tx_\gamma^-)\nonumber\\
&\leq&\|Tx_\gamma^+-Tx\|+\|Tx-Tx_\gamma^-\|\\&=&\|x_\gamma^+-x\|+\|x-x_\gamma^-\|\leq2+\varepsilon,
\end{eqnarray*}
which implies that
\[y_\gamma^*(Tx_\gamma^+-Tx)\geq \|x_\gamma^+-x\|-\varepsilon , \ \ y_\gamma^*(Tx-Tx_\gamma^-)\geq\|x-x_\gamma^-\|-\varepsilon.\]
This and (\ref{LT4}) thus give
\begin{eqnarray*}y_\gamma^*(Tx)&=&1-y_\gamma^*(Tx_\gamma^+-Tx)\\&\leq&
1-\|x_\gamma^+-x\|+\varepsilon\le1-x_\gamma^*(x_\gamma^+-x)+\varepsilon\\&\leq&x_\gamma^*(x)+\varepsilon,
\end{eqnarray*}
and
\begin{eqnarray*}y_\gamma^*(Tx)&=&-1+y_\gamma^*(Tx-Tx_\gamma^-)\\&\geq &
-1+\|x-x_\gamma^-\|-\varepsilon\geq-1+x_\gamma^*(x-x_\gamma^-)-\varepsilon\\&=&x_\gamma^*(x)-\varepsilon.
\end{eqnarray*}
It follows that $y_\gamma^*(Tx)=x_\gamma^*(x)$. That is
$y_\gamma^*\circ T|_{S_X}=x_\gamma^*|_{S_X}$.

Note that $S_X=\bigcup\{ St(x_\gamma):\gamma\in\Gamma\}$ and $S_Y=\bigcup\{ T(St(x_\gamma)):\gamma\in\Gamma\}$.
Thus  we can deduce from this and (\ref{LT:2}) that  for all $x_1,x_2\in S_X$ and $\lambda\in\R$,
\begin{eqnarray}\label{LT5}\|x_1-\lambda x_2\|&=&
\max_{\gamma\in\Gamma}|x_\gamma^*(x_1-\lambda x_2)|\nonumber\\
&=&\max_{\gamma\in\Gamma}|y_\gamma^*(Tx_1-\lambda Tx_2)|\nonumber\\
&=&\|Tx_1-\lambda Tx_2\|.
\end{eqnarray}

Now we may define the required extension $\widetilde{T}$ of $T$ by
$$\widetilde{T}(x)= \left
\{ \begin{array}{ll}
\|x\|T(\frac{x}{\|x\|}), & \mbox{ if }\;x\neq0;\\
0, & \mbox{ if }\;x=0.
\end{array}
\right.$$
%For $x\neq0$ in $X$, ${\widetilde{T}(x)=\|x\|T(\frac{x}{\|x\|})}$
%and $\widetilde{T}(0)=0$.
It is easily checked from (\ref{LT5}) that $\widetilde{T}: X \rightarrow Y$ is a surjective isometry whose restriction to the unit sphere $S_X$ is just
$T$. The Mazur-Ulam theorem hence shows that $\widetilde{T}$ is linear as required. The proof is complete.
\end{proof}

We next shall that every almost-CL-space $X$ admitting a smooth point has the (T)-property.
For this statement, we need one more lemma.

\begin{lemma}\label{tan1}
Let $X$ be a Banach space, and let $C$ be a maximal convex subset of $S_X$. If $x\in C$ is a smooth point, then $C=St(x)$.
\begin{proof}
It is trivial that $C\subset  St(x)$. For the converse, given $y\in St(x)$, the Hahn-Banach theorem implies that there exists a functional $f$ in $S_{X^*}$ such that $$f(x+y)=\|x+y\|=2.$$ Therefore $f(x)=f(y)=1$. Note from the Hahn-Banach and Eidelheit separation theorems that there exists a functional $g$ in $S_{X^*}$ such that $g^{-1}\{1\}\cap S_X=C$. Since $x$ is a smooth point, we have $g=f$. Thus $y\in g^{-1}\{1\}\cap S_X=C$, and hence $St(x)\subset C$.
\end{proof}
\end{lemma}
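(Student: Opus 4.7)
The plan is to prove the two inclusions $C\subset St(x)$ and $St(x)\subset C$ separately, and to use smoothness to identify two supporting functionals at $x$ in the second step.

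First I would dispatch the easy direction. If $y\in C$, then by convexity $(x+y)/2\in C\subset S_X$, so $\|x+y\|=2$, meaning $y\in St(x)$. This gives $C\subset St(x)$ without using smoothness.

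For the reverse inclusion, I would fix $y\in St(x)$, so $\|x+y\|=2$. By Hahn--Banach, pick $f\in S_{X^*}$ with $f(x+y)=2$; since $\|x\|=\|y\|=1$ and $\|f\|=1$, it must be that $f(x)=f(y)=1$. On the other hand, $C$ is a maximal convex subset of $S_X$, so the Eidelheit separation theorem supplies some $g\in S_{X^*}$ with $g\equiv 1$ on $C$. In particular $g(x)=1=\|x\|$, so $g$ is a norming functional for $x$. Since $x$ is a smooth point, it admits a unique norm-one supporting functional, so $f=g$. Consequently $g(y)=f(y)=1$. I then need the slice $g^{-1}\{1\}\cap S_X$ to equal $C$ rather than merely contain it; this follows because the slice is itself a convex subset of $S_X$ containing $C$, and maximality forces equality. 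Hence $y\in C$ and $St(x)\subset C$.

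The main conceptual point — and the one place the argument could slip — is the step $f=g$: one has to be careful that both functionals are genuine norm-one supports at the same point $x$, so that the definition of smoothness applies. Once that identification is made, everything else is immediate. No approximation or limiting arguments are needed, so I expect the write-up to be quite short.
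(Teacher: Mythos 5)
Your proof is correct and follows essentially the same route as the paper's: the easy inclusion $C\subset St(x)$ from convexity, then Hahn--Banach to get $f$ norming $x+y$, Eidelheit separation to get $g$ supporting $C$, and smoothness at $x$ to conclude $f=g$. Your explicit justification that the slice $g^{-1}\{1\}\cap S_X$ equals $C$ by maximality is a detail the paper folds into its statement of the separation step, but the argument is the same.
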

We recall from \cite{F,Lima1, MR1} some basic facts and some notions on the almost-CL-space $X$. By applying the Hahn-Banach and Krein-Milman theorems, one can
easily prove that every maximal convex subset $F$ of $S_X$ has the form
\begin{eqnarray*}
F=\{x\in S_X: x^*(x)=1\}
\end{eqnarray*}
for some $x^*\in ex(B_{X^*})$, denoted by $F_{x^*}$. We denote by $mexB_{X^*}$ the set of those $x^*\in ex(B_{X^*})$
such that $F_{x^*}$ is a maximal convex subset of $S_X$. We say that an almost-CL-space $X$ admits a smooth point if every maximal convex set $C$ of its unit sphere $S_X$ has a smooth point.
\begin{proposition}\label{pr:1}
 Every almost-CL-space $X$ admitting a smooth point (in particular, every separable almost-CL-space) has the (T)-property.
\end{proposition}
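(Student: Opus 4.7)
The plan is to construct the required family $\{x_\gamma\}_{\gamma\in\Gamma}$ directly from the facial structure of $S_X$. Using the structural description recalled just above from \cite{F,Lima1,MR1}, every maximal convex subset of $S_X$ has the form $F_{x^*}=\{x\in S_X:x^*(x)=1\}$ for some $x^*\in mexB_{X^*}$. By the smooth-point hypothesis, each $F_{x^*}$ contains a smooth point $x_{x^*}$ (chosen once and for all via the axiom of choice); Lemma \ref{tan1} then gives $F_{x^*}=St(x_{x^*})$. Taking $\Gamma=mexB_{X^*}$ and $x_\gamma=x_{x^*}$ for $\gamma=x^*$ makes condition (i) of Definition \ref{de:1} immediate, and it identifies every maximal convex subset of $S_X$ with some $St(x_\gamma)$. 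For condition (ii), I observe that the family of convex subsets of $S_X$ containing a fixed $x\in S_X$ is non-empty (it contains $\{x\}$) and is inductive under inclusion, so Zorn's lemma supplies a maximal element; any strictly larger convex subset of $S_X$ would still contain $x$, so this maximal element is in fact a maximal convex subset of $S_X$, and therefore equals $St(x_\gamma)$ for some $\gamma\in\Gamma$.

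The main computational step is (iii). Fix $x\in S_X$, $\gamma\in\Gamma$ and $\varepsilon>0$, and set $C=St(x_\gamma)$. The almost-CL identity $B_X=\overline{co}(C\cup -C)$ supplies $y,z\in C$ and $\lambda\in[0,1]$ with
\[
\bigl\|x-\bigl(\lambda y-(1-\lambda)z\bigr)\bigr\|<\varepsilon/2.
\]
I take $x_\gamma^+=y\in St(x_\gamma)$ and $x_\gamma^-=-z\in St(-x_\gamma)$. Since $\lambda y-(1-\lambda)z-y=-(1-\lambda)(y+z)$ and $\lambda y-(1-\lambda)z-(-z)=\lambda(y+z)$, the triangle inequality yields
\begin{align*}
\|x-x_\gamma^+\|&<\varepsilon/2+(1-\lambda)\|y+z\|,\\
\|x-x_\gamma^-\|&<\varepsilon/2+\lambda\|y+z\|.
\end{align*}
Summing and using $\|y+z\|\le2$ gives $\|x-x_\gamma^+\|+\|x-x_\gamma^-\|<\varepsilon+\|y+z\|\le 2+\varepsilon$, which is exactly (iii).

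I expect no serious obstacle beyond this short triangle-inequality bookkeeping; all other steps are structural, relying on Lemma \ref{tan1} and the facial description of almost-CL-spaces cited from \cite{F,Lima1,MR1}. The parenthetical claim for separable almost-CL-spaces then reduces to the main assertion once one verifies that separability forces each maximal convex face $F_{x^*}$ to contain a smooth point, which is a standard consequence of Mazur's Gateaux differentiability theorem together with the fact that in an almost-CL-space the face $F_{x^*}$ is ``large enough'' (its span supports the residual set of smooth points of $S_X$).
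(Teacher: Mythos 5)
Your argument is correct and follows essentially the same route as the paper: conditions (i) and (ii) via the facial description $F_{x^*}=St(x_{x^*})$ supplied by Lemma \ref{tan1} together with Zorn's lemma, and condition (iii) by approximating $x$ within $\varepsilon/2$ by a combination $\lambda y+(1-\lambda)(-z)$ with $y,z\in C=St(x_\gamma)$ and then using the triangle inequality with $\|y+z\|\le 2$ (the paper phrases this as $\|y-y_1\|+\|y-y_2\|=\|y_1-y_2\|=2$, which is the same estimate). The only caveat is the parenthetical separable case, which the paper also leaves at the level of a citation: your sketched appeal to Mazur's theorem would need more care, since a maximal convex face can be nowhere dense in $S_X$, so density of the smooth points alone does not force the face to meet them.
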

\begin{proof}
By Zorn's Lemma and the previous considerations we see that
\begin{eqnarray*}
S_X=\bigcup \{F_{x^*}: x^*\in mexB_{X^*}\}.
\end{eqnarray*}
For every $x^*\in mexB_{X^*}$, let $x\in F_{x^*}$ be a smooth point of $S_X$. Then it follows directly from Lemma \ref{tan1} that
\begin{eqnarray}\label{t1}
 St(x)=F_{x^*} .
\end{eqnarray}

Now for each  $x\in S_X, x^*\in  mexB_{X^*}$ and $\varepsilon>0$, since $X$ is an almost CL-space, there exists a $y$ in co$(F_{x^*},-F_{x^*}$) such that $$\|y-x\|<\varepsilon/2.$$ For this $y$, choose $\{\lambda_k\}_{k=1}^{n}\subset\mathbb{R^+}\cup\{0\}$ with $\sum_{k=1}^{n}\lambda_k=1$, $\{x_k\}_{k=1}^{i}\subset F_{x^*}$ and $\{x_k\}_{k=i+1}^{n}\subset -F_{x^*}$ such that
\begin{eqnarray*}
y=\sum_{k=1}^{n}\lambda_kx_k.
\end{eqnarray*}
We may assume that $\sum_{k=1}^{i}\lambda_k\neq 0 \,\mbox{or}\, 1$. Set $\lambda=\sum_{k=1}^{i}\lambda_k, y_1=(1/\lambda)\sum_{k=1}^{i}\lambda_kx_k$ and $y_2=1/(1-\lambda)\sum_{k=i+1}^{n}\lambda_kx_k$. Then it is easy to see that
\begin{eqnarray*}
y=\lambda y_1+(1-\lambda)y_2
\end{eqnarray*}
with $y_1\in F_{x^*}$ and $y_2\in -F_{x^*}$. Therefore
\begin{eqnarray*}
\|y-y_1\|+\|y-y_2\|=\|y_1-y_2\|=2.
\end{eqnarray*}
Thus $\|x-y_1\|+\|x-y_2\|\leq 2+\varepsilon.$
\end{proof}

The next statement is an immediate application of Theorem \ref{th:1} and Proposition \ref{pr:1}.
\begin{corollary}\label{cor:1}
Every almost-CL-space $X$ admitting a smooth point (in particular, every separable almost-CL-space) has the MUP.
\end{corollary}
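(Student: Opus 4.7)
The plan is a direct two-step application of the preceding results. Given any Banach space $Y$ and any surjective isometry $T\colon S_X\to S_Y$, I would first invoke Proposition~\ref{pr:1} to conclude that the hypothesis on $X$ (almost-CL with a smooth point in every maximal convex subset of $S_X$) already forces $X$ to have the (T)-property. Theorem~\ref{th:1} then extends $T$ to a linear surjective isometry $\widetilde{T}\colon X\to Y$, and since $Y$ was arbitrary this yields the MUP for $X$. The witnessing family $\{x_\gamma\}_{\gamma\in\Gamma}$ for the (T)-property is assembled by choosing, for each $x^*\in mexB_{X^*}$, a smooth point $x_\gamma\in F_{x^*}$; Lemma~\ref{tan1} identifies $St(x_\gamma)$ with the face $F_{x^*}$, while the convex-combination argument built into the proof of Proposition~\ref{pr:1} delivers condition~(iii) of Definition~\ref{de:1}.

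For the parenthetical assertion about separable almost-CL-spaces, the task is to check that any such space meets the ``smooth point in every maximal face'' requirement. My plan is to combine Mazur's classical theorem on the density of smooth points in a separable Banach space with a face-localization argument: one must show that smooth points of $S_X$ meet each $F_{x^*}$, not merely lie dense in $S_X$ as a whole. This is a standard feature of the structure theory of (almost-)CL-spaces and can be cited from \cite{MR1, MR2}.

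The main obstacle, such as it is, sits entirely in this separability verification---guaranteeing that the dense $G_\delta$ of smooth points intersects each maximal face $F_{x^*}$ rather than just filling out the sphere globally. Once that is in hand, no further work is required: the corollary is an immediate consequence of Theorem~\ref{th:1} and Proposition~\ref{pr:1}, and the proof is essentially a two-line citation chain.
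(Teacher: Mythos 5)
Your proposal is correct and follows exactly the paper's route: the corollary is obtained by combining Proposition~\ref{pr:1} (the hypotheses give the (T)-property) with Theorem~\ref{th:1}. The separability verification you flag as the ``main obstacle'' is not part of this corollary's proof in the paper --- it is already subsumed in the parenthetical of Proposition~\ref{pr:1}, so the corollary really is the two-line citation chain you describe.
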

We have the following important examples of spaces having the MUP as immediate consequences of Corollary \ref{cor:1} and results on almost-CL-spaces (see \cite{MR1}).
\begin{example}
For any nonempty index set $\Gamma$, the space $\mathcal{L}^{\infty}(\Gamma)$-type spaces \cite {liu07a} including $c_0(\Gamma)$, $c(\Gamma)$ and $\ell^\infty(\Gamma)$ are CL-spaces admitting a smooth point, and thus they have the MUP.
\end{example}

\begin{example}
 Let $(\Omega,\Sigma,\mu)$ be a $\sigma$-finite measure space. Then $L_1(\Omega,\Sigma,\mu)$ is a separable CL-space, and thus it has the MUP.
\end{example}

\begin{example}
For every compact Hausdorff space $K$, $C(K)$ is a CL-space. If $K$ is a completely regular and its singleton point is a $G_\delta$, then $C(K)$ admits a smooth point, and thus it has the MUP.  In particular, for every compact metric space $K$, $C(K)$ has the MUP.
\end{example}

We will show that some two-dimensional spaces with hexagonal norms which are clearly not almost-CL-spaces having the MUP. These spaces are firstly introduced by M. Martin and J. Meri \cite{MR3}.

\begin{example}
The space $X_\gamma=(\mathbb{R}^2,\|\cdot \|_{1/2})$ whose norm is given by
\begin{eqnarray*}
\|(\xi,\eta)\|_{1/2}=\max\{|\eta|, |\xi|+1/2|\eta|\}, \quad \forall\, (\xi,\eta)\in X_{1/2}
\end{eqnarray*}
has the (T)-property, and thus it has the MUP.
\begin{proof}
Let $x_1=(0,1), x_2=(3/4,1/2),x_3=(-3/4,1/2)$, and let
\begin{eqnarray*}
A=\{\pm x_1, \pm x_2, \pm x_3\}.
\end{eqnarray*}
Then it is easily checked that for every $x\in A$, $ St(x)$ is a maximal convex set of $S_{X_{1/2}}$, and
\begin{eqnarray*}
S_{X_{1/2}}=\bigcup_{x\in A} St(x).
\end{eqnarray*}

For every $x\in S_{X_{1/2}}$, replace $x$ by $-x$ if necessary, we may assume that $$x\in \bigcup_{i=1}^{3} St(x_i).$$ To prove that $X_{1/2}$ satisfies the condition (iii) of the (T)-Property, we distinguish three cases.

Case 1. $x\in  St(x_1)$.  Set $x_2^+=(1/2, 1), x_2^-=(-1,0).$ Then $x_2^+ \in  St(x_2)$ and $x_2^-\in St(-x_2).$
The definition of the norm thus implies that
\begin{eqnarray*}
\|x-x_2^+\|+\|x-x_2^-\|=2.
\end{eqnarray*}
A similar conclusion holds with $x_2^+$ replaced by $x_3^+$ and $x_2^-$ replaced by $x_3^-$ respectively where $x_3^+=(-1/2, 1), x_3^-=(1,0)$ satisfy  $x_3^+ \in  St(x_3)$ and $x_3^-\in\mbox{star}(-x_3).$

Case 2. $x\in  St(x_2)$. Set $x_1^+=(1/2, 1), x_1^-=(1/2,-1).$  Then it is easy to check that
 \begin{eqnarray*}
\|x-x_i^+\|+\|x-x_i^-\|=2 \quad \forall i=1,3.
\end{eqnarray*}

Case 3. $x\in  St(x_3)$. This case yields to a similar argument as Case 2.
\end{proof}
\end{example}

Now Corollary \ref{cor:1} combined with the results \cite[section 4 ]{MR1} on the stability of  the classes of
almost-CL-spaces by $c_0$- and $\ell_1$-sums will show the following result.
\begin{proposition}
Let $\{X_i : i\in I\}$ be a family of separable almost-CL-spaces, and let
$X = [\bigoplus_{i\in I} X_i]_{c_0}$ or $X = [\bigoplus_{i\in I} X_i]_{\ell_1}$.
Then $X$ has the MUP.
\end{proposition}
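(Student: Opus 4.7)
The plan is to apply Corollary~\ref{cor:1}: I verify that $X$ is an almost-CL-space admitting a smooth point in every maximal convex subset of its unit sphere. The almost-CL property is immediate from the stability of the almost-CL class under $c_0$- and $\ell_1$-sums established in \cite[\S4]{MR1}, so the real work is to produce the smooth points.

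For the $c_0$-sum, the dual is $X^*=[\bigoplus_{i\in I}X_i^*]_{\ell_1}$, whose extreme ball points are concentrated on a single coordinate, of the form $(0,\ldots,x_{i_0}^*,\ldots,0)$ with $x_{i_0}^*\in ex(B_{X_{i_0}^*})$. Consequently every maximal convex subset of $S_X$ has the shape $\{y\in S_X:y_{i_0}\in F_0\}$ for some $i_0\in I$ and some maximal convex $F_0\subset S_{X_{i_0}}$. Picking a smooth point $z\in F_0$ of $S_{X_{i_0}}$ (available because $X_{i_0}$ is separable, so it admits smooth points in every maximal convex subset by Corollary~\ref{cor:1}'s hypothesis), the vector $\widehat z\in X$ whose only nonzero coordinate is $z$ at position $i_0$ is a smooth point of $X$ lying in the given maximal convex subset, its unique norming functional being the lift of the norming functional of $z$.

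For the $\ell_1$-sum, I would describe each maximal convex subset of $S_X$ as $F_{x^*}=\{y\in S_X:x_i^*(y_i)=\|y_i\|\text{ for every }i\in I\}$, where $x^*=(x_i^*)_i$ with $x_i^*\in\operatorname{mex}B_{X_i^*}$. Given such $x^*$, I would select smooth points $z_i\in F_{x_i^*}\subset S_{X_i}$ (using separability of each $X_i$) together with positive weights $(\lambda_i)$ satisfying $\sum_i\lambda_i=1$; the vector $y=(\lambda_iz_i)_i$ lies in $F_{x^*}$ and is a smooth point of $X$, because the only norming functional in $X^*=[\bigoplus_iX_i^*]_{\ell_\infty}$ is forced coordinatewise to be $x_i^*$ on each summand with $\lambda_i>0$. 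The main technical work is to correctly identify which functionals in the dual produce maximal convex subsets of $S_X$ and to verify smoothness of the constructed candidates in each sum; with these checked, Corollary~\ref{cor:1} immediately delivers the MUP for $X$.
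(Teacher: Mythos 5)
Your overall strategy coincides with the paper's: the paper disposes of this proposition in one line by combining Corollary~\ref{cor:1} with the stability of the class of almost-CL-spaces under $c_0$- and $\ell_1$-sums from \cite[Section 4]{MR1}, and what you add is the explicit verification that the sum ``admits a smooth point'' in the sense the corollary requires. Your $c_0$-sum argument is correct and works for an arbitrary index set $I$: the extreme points of $B_{X^*}=[\bigoplus_i X_i^*]_{\ell_1}$ are supported on a single coordinate, the maximal convex sets have the form you describe, and for the vector $\widehat z$ supported at $i_0$ by a smooth point $z\in F_0$, any norming functional $(f_i)$ satisfies $\sum_i\|f_i\|\le 1$ and $f_{i_0}(z)=1$, which forces $f_i=0$ for $i\ne i_0$ and $f_{i_0}$ to be the unique norming functional of $z$.

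The $\ell_1$-sum argument has a genuine gap when $I$ is uncountable. Your smoothness claim is only ``forced coordinatewise\,\dots\,on each summand with $\lambda_i>0$,'' but a family of strictly positive reals summing to $1$ is necessarily countable; for any $i$ with $\lambda_i=0$ the coordinate $f_i$ of a norming functional of $y=(\lambda_i z_i)_i$ in $[\bigoplus_i X_i^*]_{\ell_\infty}$ is completely unconstrained, so $y$ is not smooth. This is not a defect of your particular choice of $y$: every element of $[\bigoplus_{i\in I}X_i]_{\ell_1}$ has countable support, so for uncountable $I$ the space has no smooth points at all (already $\ell_1(\Gamma)$ with $\Gamma$ uncountable shows this), and hence the route through ``almost-CL-space admitting a smooth point'' cannot succeed there. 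Your construction does settle the $\ell_1$ case for countable $I$ (take $\lambda_i=2^{-i}$). For uncountable $I$ one would have to verify the (T)-property directly, bypassing Corollary~\ref{cor:1}, and even that is problematic: every point of the sphere of an uncountable $\ell_1$-sum lies in at least two distinct maximal convex sets (the coordinates of the defining functional outside the countable support can be varied), so no star $St(x)$ is convex and condition (i) of Definition~\ref{de:1} fails for every candidate family. Be aware that this gap is equally present in the paper's own one-line proof; you did not introduce it, but your write-up should either restrict the $\ell_1$ statement to countable $I$ or supply a different argument for the uncountable case.
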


We shall consider the spaces $C(K,X)$ of continuous functions
from a compact Hausdorff space $K$ into a Banach space $X$. These are also the first examples of vector valued spaces having the MUP.

\begin{proposition}
Let $K$ be a compact metric space $K$, and let $X$ be a separable almost-CL-space. Then
$C(K, X)$ has the MUP.
\end{proposition}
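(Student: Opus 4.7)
The plan is to derive this from Corollary \ref{cor:1} by showing that $C(K,X)$ is itself a separable almost-CL-space, whence the MUP follows immediately.

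First, I would verify that $C(K,X)$ is separable. Since $K$ is a compact metric space, $C(K,\mathbb{R})$ is separable; pick countable dense sequences $\{f_n\}\subset C(K,\mathbb{R})$ and $\{x_m\}\subset X$. The rational linear combinations of the elementary tensors $t\mapsto f_n(t)\,x_m$ form a countable subset of $C(K,X)$, and this set is dense because $C(K,\mathbb{R})\otimes X$ is uniformly dense in $C(K,X)$: any $g\in C(K,X)$ is uniformly continuous, so it is approximated by $\sum_i \phi_i(\cdot)\,g(t_i)$ where $\{\phi_i\}$ is a partition of unity subordinate to an open cover finer than the modulus of continuity of $g$.

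Next, I would invoke the stability result of Mart\'in and Pay\'a \cite{MR1}: if $X$ is an almost-CL-space and $K$ is any compact Hausdorff space, then $C(K,X)$ is an almost-CL-space. Concretely, the maximal convex subsets of $S_{C(K,X)}$ have the form
\[F_{t,x^*} = \{g\in S_{C(K,X)} : x^*(g(t))=1\}\]
for $t\in K$ and $x^*\in\mathrm{mex}\,B_{X^*}$, and the required decomposition $B_{C(K,X)}=\overline{\mathrm{co}}(F_{t,x^*}\cup -F_{t,x^*})$ is obtained by pasting together, via a partition of unity, the almost-CL decompositions of the pointwise values $g(s)\in B_X$ coming from the almost-CL structure of $X$.

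Combining the two ingredients, $C(K,X)$ is a separable almost-CL-space, and Corollary \ref{cor:1} applies to give the MUP. The only nontrivial content is the inheritance of the almost-CL property by $C(K,X)$, which is the main potential obstacle; however this is a known stability theorem from the references already cited, so the proposition follows as a direct corollary of the results already established. If one preferred a self-contained argument, one could alternatively verify the (T)-property for $C(K,X)$ by hand: take $\{\delta_t\otimes x_\gamma : t\in K,\,x_\gamma\text{ a smooth point of }F_{x^*}\}$ as the family $\{x_\gamma\}$ in Definition \ref{de:1}, use Lemma \ref{tan1} to show each $St(\delta_t\otimes x_\gamma)$ is maximal convex, and verify condition (iii) by the same partition-of-unity pasting applied to the almost-CL approximations inside $X$.
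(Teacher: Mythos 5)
Your proposal is correct and follows essentially the same route as the paper: establish that $C(K,X)$ is separable (the paper cites Vidossich where you give a direct tensor/partition-of-unity argument), invoke the Mart\'in--Pay\'a stability theorem that $C(K,X)$ is an almost-CL-space when $X$ is, and conclude via Corollary \ref{cor:1}. The extra detail you supply for separability and for the almost-CL inheritance is fine but not needed beyond the citations the paper itself uses.
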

\begin{proof}
It follows from \cite{Vi} that $C(K, X)$ is a separable Banach space. Note that the result in \cite[section 5 ]{MR1} states that $C(K, X)$ is an almost-CL-space if and only if $X$ is. The desired conclusion follows immediately from Corollary \ref{cor:1}.
\end{proof}

\vskip8mm

\noindent\textbf{Acknowledgment.} The authors express their appreciation to Guanggui Ding
for many very helpful comments regarding isometric theory in Banach spaces.

%--------------------------
\vskip1.2cm
%\begin{thebibliography}{1}


\begin{thebibliography}{99}
\bibitem{CD}L. Cheng and Y. Dong, \textit{On a generalized Mazur-Ulam question: extension of isometries between unit spheres of Banach spaces}. J. Math. Anal. Appl. \textbf{377} (2011), no. 2, 464--470,

%\bibitem{DG1} G. Ding, \emph{On isometric extensions and distance one
%preserving mappings}, Taiwanese J. Math., \textbf{10}(2006),
%243--249.
\bibitem{D8}
G. Ding, {\it On perturbations and extensions of isometric
operators},  Taiwanese J. Math. {\bf5} (1) (2001) 109--115.
\bibitem{D9}
G. Ding, {\it On isometric extension problem between two unit
spheres,}  Sci. China Ser. A \textbf{52} (10) (2009),
2069--2083.

\bibitem{DG07}G. Ding, \textit{The isometric extension of the into
mapping from the $\mathcal{L}^{\infty}(\Gamma)$-type space to some
normed space $E$},  Illinois J. Math., \textbf{51}(2)(2007),
445--453.

%\bibitem{4} G. Ding, \textit{The 1-Lipschitz mapping between the
%unit spheres of two Hilbert spaes can be extended to a real linear
%sometry of the whole space}, Sci. China Ser. A,
%\textbf{45}(4)(2002), 479--483.

%\bibitem{5} G. Ding, \textit{On the extension of isometries between unit
%spheres of E and $C(\Omega)$}, Acta Math. Sin. Engl. Ser.,
%\textbf{19}(2003), 793--800.

%\bibitem{6} G. Ding, \textit{The isometric extension problem in the unit
%spheres of $l^{p}(\Gamma)(p>1)$ type spaces}, Sci. China Ser. A,
%\textbf{46}(3) (2003), 333--338.

%\bibitem{7} G. Ding, \textit{The representation of onto isometric mappings
%between two spheres of $\ell^{\infty}$-type spaces and the
%appliation on isometric extention problem}, Sci. China Ser. A,
%\textbf{34}(2) (2004), 157--164(in Chinese); \textbf{47}(5)(2004),
%722--729(in English).

%\bibitem{8} G. Ding, \textit{The representation theorem of onto isometric
%mappings between two unit spheres of $l\sp1(\Gamma)$ type spaces and
%the application to the isometric extension problem}, Acta Math. Sin.
%(Engl. Ser.), \textbf{20}(2004), 1089--1094.

\bibitem{DG08}G. Ding, \textit{The isometric extension of ``into" mappings
on unit spheres of $AL$-spaces}, Sci. China Ser. A,
\textbf{51}(10)(2008), 1904--1918.

%\bibitem{DG04}G. Ding, \textit{The isometric extension of an into mapping
%from the unit sphere $S[\ell(\Gamma)]$ to the unit sphere $S(E)$},
%Acta Math. Sci. (Engl. Ser.), \textbf{29B}(3)(2009), 469--479.

%\bibitem{Eg}H. Eggleston, \textit{Convexity}, Cambridge Univ. Press, (1958).

\bibitem{FW}X. Fang and J. Wang, \textit{On extension of isometries between
the unit spheres of normed space $E$ and $C(\Omega)$}, Acta Math.
Sin. (Engl. Ser.), \textbf{22}(6)(2006), 1819--1824.
\bibitem{F} R. E. Fullerton, Geometrical characterization of certain function spaces, in: Proc. Inter. Sympos. Linear Spaces, Jerusalem, 1960, Jerusalem Academic Press/Pergamon, Jerusalem/Oxford, 1961, pp. 227--236.
\bibitem{H} R. B. Holmes: Geometric functional analysis and its applications. Springer Verlag, New York, 1975.

%\bibitem{FS} X. Fu and S. Stevi\'{c}, \textit{The problem of isometric extension in the unit sphere of the space
% $s_p(\alpha)$}, Nonlinear Anal., \textbf{74} (2011), no. 3, 733--738.

%\bibitem{Figiel} T. Figiel, \emph{On non-linear isometric embedding of normed
%linear spaces}, Bull. Acad. Polon. Si. Ser. Math. Astron. Phys.,
%\textbf{16}(1968), 185--188.

\bibitem {J}J. Lindestrauss, {\it Extension of compact operators}, Memoirs Amer. Math. Soc. 48, Providence 1964.
\bibitem{J2} J. Lindenstrauss, L. Tzafriri, Classical Banach Spaces II, Function Spaces, Springer-Verlag, 1979.
\bibitem{LL} L. Li and R. Liu, \textit{On the isometric extension problem: a survey}, Stability of functional
equations and applications, Tamsui Oxf. J. Math. Sci., \textbf{24}(2008), 339--354.

\bibitem{Lima1}A. Lima, {\it Intersection properties of balls and subspaces in Banach spaces}, Trans. Amer. Math. Soc., 227 (1977), 1--62.
\bibitem{Lima2} A. Lima, {\it Intersection properties of balls in spaces of compact operator}, Ann. Inst. Fourier (Grenoble) 28 (1978) 35--65.
\bibitem{liu07a}R. Liu, \textit{On extension of isometries between the unit
sphere of $\mathcal{L}^\infty(\Gamma)$-type space and a Banach
space $E$}, J. Math. Anal. Appl., \textbf{333}(2007), 959--970.

\bibitem{LZ} R. Liu and L. Zhang, \textit{On extension of isometries and approximate
isometries between unit spheres}, J. Math. Anal. Appl.,
\textbf{352}(2009), 749--761.

\bibitem{RL} R. Liu, \textit{A note on extension of isometric embedding from a Banach
space E into the universal space $\ell_\infty(\Gamma)$}, J. Math.
Anal. Appl., \textbf{363}(2010), 220-2-24.

\bibitem{16} P. Mankiewiz, \emph{On extension of isometries in normed spaces}, Bul.
Acad. Polon. Sci., \textbf{20}(1972), 367--371.
\bibitem{KM} V. Kadets and M. Martin, \textit{Extension of isometries between unit spheres
of finite-dimensional polyhedral Banach spaces}, preprint.

http://www.ugr.es/~mmartins/Curriculum/Papers/pre-KadetsMartin--isometries-polyhedral.pdf

\bibitem{MR1} M. Martin and R. Pay\'{a}, {\it On CL-spaces and almost CL-spaces}, Ark. Mat. 42 (2004), 107--118.

\bibitem{MR2}M. Martin, {\it Banach spaces having the Radon-Nikod\'{y}m property and numerical index 1}, Proc.
Amer. Math. Soc. 131 (2003), 3407--3410.
\bibitem{MR3}M. Martin and J. Meri, {\it Numerical index of some polyhedral norms on the plane},  Linear Multilinear Algebra  55(2) (2007), 175--190.

\bibitem{17} S. Mazur and S. Ulam,
\emph{Sur les transformations isom$\acute{e}$triques d'espaces
vectoriels norm$\acute{e}$s}, C.R. Acad. Sci. Paris,
\textbf{194}(1932), 946--948.
\bibitem{Ta1}
D. Tan, {\it Extension of isometries on unit sphere of $L^{\infty}$,}
Taiwanese J. Math. {\bf15} (2) (2011), 819--827.
\bibitem{Ta2}
D. Tan, {\it Extension of isometries on unit sphere of $L^p(\mu)$ spaces,} Acta Math. Sin. (Engl. Ser.), doi: 10.1007/s10114-011-0302-6.

\bibitem{Ta3}D. Tan, \textit{On extension of isometries on the unit spheres of $L^p$-spaces for $0<p\le 1$},
Nonlinear Anal., \textbf{74}(2011), 6981--6987.

%\bibitem{22} Th.M.
%Rassias and Peter $\check{S}$emrl, \textit{On the Mazur-Ulam theorem and the
%Aleksandrov problem for unit distance preserving mappings}, Proc.
%Amer. Math. Soc. \textbf{118}(1993), 919--925.

\bibitem{Ti}D. Tingley, \textit{Isometries of the unit spheres}, Geom. Dedicata,
\textbf{22}(1987), 371--378.

\bibitem{Vi} G. Vidossich, {\it Characterizing separability of function spaces}, Invent. Math. 10(1970), 205--208.

%\bibitem{WJ} J. Wang, \emph{On extension of isometries
%between unit spheres of $AL_{p}$-spaces ($1<p<\infty$)}, Proc. Amer.
%Math. Soc., \textbf{132}(2004), 2899--2909.

%\bibitem{WR2} R. Wang, \textit{Isometries of $C_{0}(\Omega,\sigma)$ type spaces},
%Kobe J. Math. \textbf{12}(1995), 31--43.

%\bibitem{WO} R. Wang and A. Orihara, \textit{Isometries on the $l^1$-sum of $C_0(\Omega,E)$ type
%spaces}, J. Math. Sci. Univ. Tokyo, \textbf{2}(1995), 131--154.

\end{thebibliography}
\end{document}